\documentclass[12pt]{article}

\usepackage{amsmath,amsthm,amsfonts,amssymb, color,xcolor,subcaption,graphicx} 

\usepackage{todonotes, dsfont}

\newtheorem{theorem}{Theorem}[section]

\newtheorem{lemma}[theorem]{Lemma}
\newtheorem{definition}[theorem]{Definition}

\def\cB{\mathcal{B}}
\def\cC{\mathcal{C}}

\def\cE{\mathcal{E}}
\def\cF{\mathcal{F}}
\def\cG{\mathcal{G}}
\def\cH{\mathcal{H}}

\def\cL{\mathcal{L}}

\def\cN{\mathcal{N}}
\def\cP{\mathcal{P}}

\def\cZ{\mathcal{Z}}

\def\bE{\mathbb{E}}

\def\bP{\mathbb{P}}
\def\bR{\mathbb{R}}

\def\k{\kappa}

\def\s{\sigma}

\newcommand{\fm}{\mathfrak{m}}

\begin{document}

\title{It\^o integral for a two-sided L\'evy process}

\author{Raluca M. Balan\footnote{Corresponding author. University of Ottawa, Department of Mathematics and Statistics, 150 Louis Pasteur Private, Ottawa, Ontario, K1N 6N5, Canada. E-mail address: rbalan@uottawa.ca.} \footnote{Research supported by a grant from the Natural Sciences and Engineering Research Council of Canada.}
\and
Jaime Garza \footnote{University of Ottawa, Department of Mathematics and Statistics, 150 Louis Pasteur Private, Ottawa, Ontario, K1N 6N5, Canada. E-mail address: jgarza@uottawa.ca.}
}

\date{May 12, 2026}
\maketitle

\begin{abstract}
\noindent In this article, we construct an It\^o integral with respect to a two-sided finite-variance L\'evy process $\{L(x)\}_{x\in \bR}$, without a Gaussian component.  Using Rosenthal inequality for discrete-time martingales, we give an estimate for the $p$-th moment of this integral, for any even integer $p\geq 2$. Then, using Poisson-Malliavin calculus, we show that the It\^o integral is an extension of the Hitsuda-Skorohod integral with respect to the compensated Poisson random measure associated to the L\'evy process.
\end{abstract}

\noindent {\em MSC 2020:} Primary 60G51 ; Secondary 60G60, 60H07

\vspace{1mm}

\noindent {\em Keywords:} L\'evy process, L\'evy white noise, Poisson random measure, It\^o integral, Poisson-Malliavin calculus



\section{Introduction}

Let $N$ be Poisson random measure on $\bR \times \bR_0$ of intensity $\fm(dx,dz)=dx \nu(dz)$,
defined on a complete probability space $(\Omega,\cF,\bP)$, where $\nu$ is a measure on $\bR_0$ which satisfies:
\[
m_2:=\int_{\bR_0}z^2 \nu(dz)<\infty.
\]
The space $\bR_0:=\bR \verb2\2 \{0\}$ is endowed with the distance $d(x,y)=|x^{-1}-y^{-1}|$. Let $\widehat{N}(F)=N(F)-\fm(F)$ be the compensated version of $N$. 

The goal of this short note is to construct an It\^o integral for a two-sided L\'evy process $\{L(x) \}_{x \in \bR}$ with the L\'evy-It\^o decomposition:
\[
L(x)={\rm sgn}(x) \left(b|x|+\int_{A_x\times \{|z| \leq 1\}} z \widehat{N}(dy,dz)+\int_{A_x \times \{|z|>1\}}z N(dy,dz)\right), \quad x\in \bR,
\]

and to discuss two of its properties: (i) a moment estimate; (ii) its relation with the Hitsuda-Skorohod integral. 
Here $A_x=[0,x]$ if $x \geq 0$ and $A_x=[x,0]$ if $x<0$.  
We assume that 
\[
b=-\int_{\{|z|>1\}} z\nu(dz),
\]
so that $L(x)$ is centered and has the representation:
\[
L(x)={\rm sgn}(x)\int_{A_x \times \bR_0}z \widehat{N}(dy,dz).
\]

While there are many references dedicated to stochastic analysis for L\'evy processes (e.g. \cite{applebaum09,BGJ87,DOP09,last16,NN,PZ07}), this specific construction and properties cannot be found in the literature. One can define a stochastic integral with respect to $\widehat{N}$ for any function  $f \in L^2(\bR \times \bR_0;\fm)$, and this integral (called $I_1(f)$ in Section \ref{section-Malliavin} below) is an isometry. Then, for any deterministic $\varphi \in L^2(\bR)$, we define 
\[
\int_{\bR} \varphi(x) L(dx)=\int_{\bR \times \bR_0} \varphi(x) z \widehat{N}(dx,dz).
\]
 The difficulty is that this integral is {\em not} a martingale, since $N$ has no time component.

\medskip

To address this problem, we consider a more general process:
\begin{equation}
\label{def-L}
L(A)=\int_{A \times \bR_0}z \widehat{N}(dx,dz) \quad \mbox{for all $A \in \cB_b(\bR)$},
\end{equation}
where $\cB_b(\bR)$ is the class of bounded Borel sets of $\bR$. 

\medskip

We say that $L=\{L(A);A \in \cB_b(\bR)\}$ is a {\em (time-independent) L\'evy white noise}. This process has the following properties: \\
(i) $L(\emptyset)=0$ and $L(\{x\})=0$ a.s. for any $x \in \bR$;\\
(ii) $\bE[L(A)]=0$ and $\bE|L(A)|^2=m_2|A|$, 
where $|A|$ is the Lebesgue measure of $A\in \cB_b(\bR)$; \\
(iii) $L(A_1),\ldots, L(A_k)$ are independent, for any disjoint sets $A_1,\ldots,A_k \in \cB_b(\bR)$;\\
(iv) for any $A \in \cB_b(\bR)$, $L(A)$ has characteristic function:
\begin{equation}
\label{ch-funct-L}
\bE(e^{i\theta L(A)})=\exp\left\{ |A| \int_{\bR_0} \big(e^{i\theta z}-1-i\theta z\big) \nu(dz) \right\} \quad \mbox{for any} \ \theta \in  \bR.
\end{equation}

Letting $L(x):=L([0,x])$ if $x \geq 0$ and $L(x):=-L([x,0])$ if $x<0$, we see that:
\[
L((x,y])=L(y)-L(x) \quad \mbox{for any} \quad x,y \in \bR, x<y.
\] 
Then $\{L(x)\}_{x\in \bR}$ is a L\'evy process, since it has stationary and independent increments, and $L(0)=0$ a.s. Moreover, $\bE[L(x)]=0$ and $\bE|L(x)|^2=m_2 |x|$ for any $x \in \bR$.

\medskip

This article is organized as follows. In Section \ref{section-Ito}, we give the construction of the It\^o integral with respect to $L$. In Section \ref{section-moment}, we prove the following moment estimate, which may be well-known but we could not locate it in the literature:
\[
\bE|L(x)|^p \leq C_p^* \left\{(m_2|x|)^{p/2}+m_p|x| \right\} \quad 
\mbox{where} \quad
m_p=\int_{\bR_0}|z|^p\nu(dz).
\]
Then, we extend this inequality to random integrands. 
In Section \ref{section-Malliavin}, we prove that the Hitsuda-Skorohod integral is an extension of the It\^o integral.

\section{It\^o integral}
\label{section-Ito}

In this section, we give the construction of the integral with respect to the L\'evy white noise $L$. This is similar to an It\^o integral, except that it is defined on $\bR$, instead of $[0,\infty)$.

\medskip

We let $(\cF_x)_{x\in \bR}$ be the filtration induced by $N$, given by:
\[
\cF_x=\s\big\{ N\big([a,b] \times \Gamma\big); -\infty<a<b\leq x,  \, \Gamma \in \cB_b(\bR_0) \big\} \vee \cN,
\]
where $\cN$ is the class of $\bP$-negligible sets.

A process $\Phi=\{\Phi(x)\}_{x\in \bR}$ is called {\em simple} if it is of the form
\begin{equation}
\label{simple}
\Phi(x)=\sum_{i=1}^n Y_i 1_{(b_{i-1},b_i]}(x)
\end{equation}
where $-\infty<b_0<b_1<\ldots<b_n<\infty$ and $Y_i$ is a bounded $\cF_{b_{i-1}}$-measurable for all $i=1,\ldots,n$. 
We denote by $\cE$ the class of simple processes. 

\begin{definition}
\label{def-pred}
{\rm
Let $\cP=\s(\cE)$ be the predictable $\sigma$-field on $\Omega \times \bR$. A process $\{\Phi(x)\}_{x\in \bR}$ is called {\em (spatially) predictable} if the map $(\omega,x) \mapsto \Phi(\omega,x)$ is $\cP$-measurable.
}
\end{definition}


\medskip

For a simple process $X$ of form \eqref{simple} and $K>0$, we define the stochastic integral with respect to $L$ on $[-K,K]$ by
\[
I_K(X):=\int_{-K}^K X(x)L(dx)=\sum_{i=1}^n Y_i L\big((b_{i-1},b_i] \cap [-K,K] \big).
\]
This integral has the following properties:
\begin{description}
\item[(i)] $\bE[I_K(X)]=0$ and $\bE|I_K(X)|^2=m_2 \big[X \big]_{K,2}^2$ for any $X \in \cE$, where
\[
\big[X \big]_{K,2}^2:=\bE \int_{-K}^K |X(x)|^2dx.
\]
\item[(ii)] $I_K(aX+bY)=aI_K(X)+bI_K(Y)$ for any $X,Y \in \cE$ and $a,b \in \bR$;

\item[(iii)] $I_{K}(X)=I_{K'}\big(X 1_{[-K,K]}\big)$ for any $X \in \cE$ and  $K'\geq K>0$.
\end{description}

Let $\cL_2$ be the set of predictable processes $X$ such that $[X]_{K,2}<\infty$ for all $K>0$.

\begin{lemma}
\label{approx}
If $X \in \cL_2$, then there exists a sequence $(X_m)_{m\geq 1}$ of simple processes such that 
 
$\big[X_m-X\big]_{K,2} \to 0$ as $m \to \infty$, for any $K>0$.
\end{lemma}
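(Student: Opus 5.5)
The plan is the standard monotone class / density argument, localized to each $[-K,K]$ and then glued together by a diagonal procedure.

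\textbf{Step 1: reduction to a single interval.} Fix $K>0$ and consider the finite measure space $(\Omega\times[-K,K],\cP,\bP\otimes dx)$. It suffices to show that for every $K\in\mathbb{N}$ and every $\varepsilon>0$ there is $\Phi\in\cE$ with $[\Phi-X]_{K,2}<\varepsilon$. Indeed, we then pick $X_m\in\cE$ with $[X_m-X]_{m,2}<1/m$. Since $[\cdot]_{K,2}$ is nondecreasing in $K$, for fixed $K$ and all $m\geq K$ we get $[X_m-X]_{K,2}\le[X_m-X]_{m,2}<1/m\to0$.

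\textbf{Step 2: truncation.} Replace $X$ by $X1_{[-K,K]}1_{\{|X|\le M\}}$. This process is still $\cP$-measurable, because $1_{(-K,K]}$ is a deterministic simple process and $\{x=-K\}$ has Lebesgue measure zero. By dominated convergence, $[X-X1_{\{|X|\le M\}}]_{K,2}\to0$ as $M\to\infty$. It therefore suffices to approximate bounded $\cP$-measurable processes in $L^2(\Omega\times[-K,K])$.

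\textbf{Step 3: monotone class argument.} Let $\mathcal H$ be the set of bounded $\cP$-measurable $X$ that are $L^2(\Omega\times[-K,K])$-limits of elements of $\cE$.
\begin{itemize}
\item $\mathcal H$ is a vector space, because $\cE$ is.
\item $\mathcal H$ is closed under bounded monotone limits, by dominated convergence and a diagonal choice.
\item $\mathcal H$ contains the constants on $[-K,K]$: take $1_{(-K-1,K]}$ with $Y_1=1$, which is $\cF_{-K-1}$-measurable.
\item $\mathcal H$ contains $\cE$, and $\cE$ is closed under multiplication. The product of two simple processes is simple after refining the partitions, and on a common refinement interval $(c_{j-1},c_j]\subset(b_{i-1},b_i]$ the coefficient $Y_i$ is $\cF_{b_{i-1}}\subset\cF_{c_{j-1}}$-measurable.
\end{itemize}
The functional monotone class theorem then gives that $\mathcal H$ contains all bounded $\s(\cE)=\cP$-measurable processes, which proves the claim.

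\textbf{Main obstacle.} The delicate point is the multiplicativity of $\cE$ and the mismatch between processes on $\Omega\times\bR$ and on $\Omega\times[-K,K]$. Restricting to $[-K,K]$ only helps, since the norm ignores values outside that interval. The refinement argument uses only that the filtration is increasing, which holds because $\cF_x$ grows with $x$. I will also check that the monotone class theorem applies with $\cE$ as the multiplicative class. If needed, I will instead use the $\pi$-system of predictable rectangles $F\times(a,b]$ with $F\in\cF_a$, whose indicators lie in $\cE$, together with Dynkin's $\pi$-$\lambda$ theorem applied to the indicators.
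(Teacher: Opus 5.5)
Your proof is correct, and it takes a different route from the paper. The paper omits details and refers to Karatzas--Shreve. For bounded predictable $X$ it adapts their Chapter 3, Lemma 2.4 to $[-K,K]$:
\begin{itemize}
\item regularize $X$ by backward moving averages $m\int_{x-1/m}^{x}X(y)\,dy$, which are continuous and adapted;
\item approximate these continuous processes by left-endpoint dyadic discretizations, which lie in $\cE$;
\item conclude with Lebesgue's differentiation theorem and bounded convergence.
\end{itemize}
The unbounded case is handled by truncation as in their Proposition 2.6, and a diagonal choice over $K$, as in your Step 1, produces a single sequence.

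You instead use that $\cP$ is \emph{defined} as $\sigma(\cE)$ and that $\cE$ is closed under products. The functional monotone class theorem then gives density of $\cE$ in $L^2(\Omega\times[-K,K],\cP,\bP\otimes dx)$ directly. Your checks of its hypotheses are all sound: common refinement plus monotonicity of $(\cF_x)$ for products, $1_{(-K-1,K]}$ for the constants, and bounded convergence on a finite measure space for monotone limits. The $\pi$--$\lambda$ fallback is not needed.

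What each route buys:
\begin{itemize}
\item Your argument is shorter and purely measure-theoretic. It also transfers verbatim to the $[\cdot]_{K,p}$-approximation that the paper only asserts in the proof of Lemma \ref{lem-simple-p}, since H\"older's inequality gives $[\cdot]_{K,p}\leq\big(1+(2K)^{1/2-1/p}\big)\|\cdot\|_{L^p(\Omega\times[-K,K])}$.
\item The Karatzas--Shreve route uses only progressive measurability, not the fact that $\cP$ is generated by $\cE$. So it proves more: every bounded progressively measurable process lies in the $[\cdot]_{K,2}$-closure of $\cE$.
\end{itemize}

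One minor slip in Step 2: the $\cP$-measurability of $X1_{[-K,K]}$ does not follow from $\{-K\}$ being Lebesgue-null. Either note that $\Omega\times\{-K\}=\bigcap_n \Omega\times(-K-1/n,-K]\in\cP$, or drop the spatial cutoff altogether, since $[\cdot]_{K,2}$ ignores values outside $[-K,K]$.
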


\begin{proof}
If $X$ is predictable and bounded, we use a similar argument as in the proof of Lemma 2.4 in Chapter 3 of \cite{KS}, 
replacing $[0,T]$ by $[-K,K]$. In the general case, we proceed as in the proof of Proposition 2.6 in Chapter 3, {\em ibid.} We omit the details.
\end{proof}

If $X \in \cL_2$ and $(X_m)_{m\geq 1}$ is the sequence from Lemma \ref{approx}, then $\{I_K(X_m)\}_{m\geq 1}$ is Cauchy in $L^2(\Omega)$. We denote its limit by $I_K(X)$. The limit does not depend on the sequence $(X_m)_{m\geq 1}$. Properties {\bf (i)}-{\bf (iii)} above continue to hold for processes in $\cL_2$. 

\medskip

To extend the integral to the entire space, we let $\cL_2^*$ be the set of predictable processes $X$ such that
\[
\big[X \big]_{2}^2:=\bE\int_{\bR}|X(x)|^2dx<\infty.
\]

If $X \in \cL_2^*$ then $\{I_K(X)\}_{K>0}$ is Cauchy in $L^2(\Omega)$, since for any $K'>K$,
\begin{align*}
\bE|I_{K'}(X)-I_K(X)|^2=\bE\big|I_{K'}(X)-I_{K'}\big(X1_{[-K,K]}\big)\big|^2=m_2\bE\int_{\{K<|x|\leq K'\}}|X(x)|^2 dx \to 0,
\end{align*}
as $K,K' \to \infty$. We denote $I(X):=\lim_{K \to \infty}I_K(X)$ in $L^2(\Omega)$.  We will use the notation:
\[
I(X)=\int_{\bR}X(x)L(dx).
\] 

Then $\bE[I(X)]=0$ and
$\bE|I(X)|^2 =m_2 \big[X \big]_{2}^2$.

\section{Moment estimate}
\label{section-moment}

In this section, we prove a moment estimate for the stochastic integral with respect to the L\'evy white noise $L$. Since the noise does not have a time component, we cannot use Rosenthal's inequality for continuous martingales, as in \cite{BZ24,BZ26}. Instead, we develop a moment inequality for $L(\varphi)$ for any $\varphi \in L^2(\bR)$, then for the stochastic integral of a simple process (using Rosenthal's inequality for discrete martingales), and finally for the stochastic integral of a general process (by approximation with simple processes).

\medskip

For any $p\geq 1$, we 
we denote by $\|\cdot\|_p$ the norm in $L^p(\Omega)$, and
\[
m_p:=\int_{\bR_0}|z|^p \nu(dz).
\]

We recall that for any integer $m\geq 2$, the $m$-th moment of a random variable $X$ is given by the {\em cumulant formula}:
\[
\bE[X^m]=\sum_{\pi \in \cP_m} \prod_{B \in \pi} \k_{|B|},
\]
where $\cP_{m}$ is the set of all partitions of $\{1,\ldots,m\}$, ``$B \in \pi$'' means that $B$ is one of the sets (or ``blocks'') of partition $\pi$, $|B|$ is the size of $B$, and $\k_n$ is the {\em $n$-th cumulant} of $X$:
\[
\k_n=(-i)^n H^{(n)}(0) \quad \mbox{where} \quad H(\theta)=\log \bE[e^{i\theta X}],\theta \in \bR.
\]
If we let $\mu=\bE(X)$ and $\mu_n'=\bE[(X-\mu)^n]$ for any integer $n \geq 1$, then
\[
\k_1=\mu, \quad \k_2=\mu_2', \quad \k_3=\mu_3', \quad \k_4=\mu_4'-3\mu_3'.
\]

Note that if $\bE[X]=0$, then $\k_1=0$ and hence,
\begin{equation}
\label{cum}
\bE[X^m]=\sum_{\pi \in \cP_m^*} \prod_{B \in \pi} \k_{|B|},
\end{equation}
where $\cP_m^*$ is the set of all partitions $\pi \in \cP_m$ which contain blocks $B$ with $|B|\geq 2$.

\medskip

\begin{lemma}
\label{lem-p-mom-L}
Let $p\geq 2$ be an even integer such that $m_p<\infty$. For any $\varphi \in L^2(\bR)$,
\begin{equation}
\label{p-mom-L}
\bE|L(\varphi)|^p \leq C_p^* \left\{ \left(m_2\int_{\bR}|\varphi(x)|^2 dx\right)^{p/2} + m_p \int_{\bR}|\varphi(x)|^p dx\right\},
\end{equation}
where  $C_p^*={\rm card}(\cP_p^*)$.
In particular, for any set $A \in \cB_b(\bR)$,
\begin{equation}
\label{p-mom-L}
\bE|L(A)|^p \leq C_p^* \left\{ \big(m_2|A|\big)^{p/2} + m_p |A| \right\}.
\end{equation}
\end{lemma}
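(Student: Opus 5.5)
The plan is to use the cumulant formula \eqref{cum}. Here $L(\varphi)=\int_{\bR\times\bR_0}\varphi(x)z\widehat{N}(dx,dz)$ is a centered random variable. Its characteristic function extends \eqref{ch-funct-L}:
\[
\bE\big(e^{i\theta L(\varphi)}\big)=\exp\left\{\int_{\bR}\int_{\bR_0}\big(e^{i\theta\varphi(x)z}-1-i\theta\varphi(x)z\big)\nu(dz)dx\right\}.
\]
I would first verify this for simple functions $\varphi=\sum_j a_j 1_{A_j}$ with disjoint $A_j$, using independence property (iii) and \eqref{ch-funct-L}. For general $\varphi\in L^2(\bR)$ it then follows by $L^2(\Omega)$-approximation, using the isometry and dominated convergence together with $|e^{iu}-1-iu|\le u^2/2$.

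Next I compute the cumulants by differentiating $H(\theta)$ under the integral sign. This gives $\k_1=0$ and, for $n\ge 2$,
\[
\k_n=\int_{\bR}\varphi(x)^n dx\int_{\bR_0}z^n\nu(dz).
\]
The differentiation is justified when $m_n<\infty$ and $\int|\varphi|^n<\infty$. Two points need care here. First, $m_2,m_p<\infty$ imply $m_n<\infty$ for $2\le n\le p$ by interpolation, since $|z|^n\le z^2+|z|^p$. Second, $\int|\varphi|^n$ may be infinite for an unbounded $\varphi\in L^2$. I would handle this by interpolation in the same way: if $\int|\varphi|^p<\infty$ then $|\varphi|^n\le|\varphi|^2+|\varphi|^p$ is integrable. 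If $\int|\varphi|^p=\infty$, the right-hand side of \eqref{p-mom-L} is infinite (when $m_p>0$; if $m_p=0$ then $\nu=0$ and everything is trivial), so there is nothing to prove.

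Then I bound each cumulant: $|\k_n|\le m_n\int|\varphi|^n=:a_n$. The key step is the log-convexity (Hölder) bound, for $2\le n\le p$,
\[
a_n\le a_2^{\frac{p-n}{p-2}}\,a_p^{\frac{n-2}{p-2}}.
\]
This follows from Hölder's inequality on the measure space $(\bR\times\bR_0, dx\,\nu(dz))$ applied to $|\varphi(x)z|^n=|\varphi z|^{2\alpha}|\varphi z|^{p(1-\alpha)}$ with $2\alpha+p(1-\alpha)=n$. Now fix a partition $\pi\in\cP_p^*$ with blocks $B_1,\dots,B_k$, each of size $n_j\ge 2$ and with $\sum n_j=p$. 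Multiplying the bounds gives
\[
\prod_j a_{n_j}\le a_2^{\theta}a_p^{1-\theta}, \qquad \theta=\frac{\sum_j(p-n_j)}{p-2}\cdot\frac{1}{1}\ \text{(suitably normalized)}.
\]
Cleaner bookkeeping is to write $a_n=a_2^{n/2}\,(a_p/a_2^{p/2})^{(n-2)/(p-2)}$. Then
\[
\prod_j a_{n_j}=a_2^{p/2}\,r^{\sum_j(n_j-2)/(p-2)}, \qquad r=\frac{a_p}{a_2^{p/2}},
\]
and the exponent $s=\sum_j(n_j-2)/(p-2)$ lies in $[0,1]$ because $\sum_j(n_j-2)=p-2k\le p-2$. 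Since $r^s\le 1+r$ for $s\in[0,1]$, each product is bounded by $a_2^{p/2}+a_p$.

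Finally, summing over $\pi\in\cP_p^*$ in \eqref{cum}, using $\bE|L(\varphi)|^p=\bE[L(\varphi)^p]$ since $p$ is even, gives the bound $C_p^*\{(m_2\int\varphi^2)^{p/2}+m_p\int|\varphi|^p\}$. Taking $\varphi=1_A$ yields the second inequality. The main obstacle is the Hölder interpolation step that controls the mixed products of cumulants. The rest, namely the characteristic function and the differentiation of cumulants, is routine, with the integrability caveat handled as above.
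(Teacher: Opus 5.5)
Your proposal is correct and follows the same route as the paper: compute the cumulants $\k_n=\widetilde{m}_n\int_{\bR}\varphi(x)^n dx$, expand $\bE[L(\varphi)^p]$ over $\cP_p^*$, interpolate $m_n\int_{\bR}|\varphi|^n$ between the orders $2$ and $p$ by H\"older, and bound each partition's product by $(m_2\int_{\bR}\varphi^2)^{p/2}+m_p\int_{\bR}|\varphi|^p$. Your $r^s\le 1+r$ plays the role of the paper's weighted AM--GM step, and your extra care fills in details the paper glosses over: bounding $|\k_n|$ instead of claiming the products are nonnegative, justifying the characteristic function and the differentiation up to order $p$ for general $\varphi\in L^2(\bR)$, and disposing of the case $\int_{\bR}|\varphi|^p=\infty$. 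Note only that your displayed identities for $a_n$ and $\prod_j a_{n_j}$ should be inequalities.
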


\begin{proof}
By \eqref{ch-funct-L} and Taylor formula,
\[
H(\theta):=\log \bE[e^{i \theta L(A)}]=\int_{\bR_0}\int_{\bR}\big(e^{i \theta \varphi(x)z} -1-i\theta \varphi(x) z\big) dx\nu(dz)=\sum_{n \geq 2} \frac{\big(i\theta \big)^n}{n!} \widetilde{m}_n \int_{\bR} \varphi(x)^n dx,
\]
where $\widetilde{m}_n=\int_{\bR_0} z^n \nu(dz)$. Hence, $H^{(n)}(0)=i^n \widetilde{m}_n \int_{\bR} \varphi(x)^n dx$, and the $n$-th cumulant of $L(\varphi)$ is 
\[
\k_n= \widetilde{m}_n \int_{\bR} \varphi(x)^n dx \quad \mbox{for any $n\geq 2$}.
\]

Since $p$ is even, by the cumulant formula \eqref{cum},
\begin{equation}
\label{mom-LA1}
0\leq \bE|L(\varphi)|^p=\bE[L(\varphi)^p]=\sum_{\pi \in \cP_p^*} \prod_{B \in \pi} \k_{|B|},
\end{equation}

Let $\pi \in \cP_p^*$ be arbitrary. Assume that $\pi$ has blocks $B_1,\ldots,B_k$ with $|B_i|=b_i$ for all $i=1,\ldots,k$. Then $\sum_{i=1}^k b_i=p$ and $b_i \geq 2$. It follows that
\[
0\leq \prod_{B \in \pi} \k_{|B|}=\prod_{i=1}^{k}\k_{b_i}=\prod_{i=1}^{k}\widetilde{m}_{b_i} \int_{\bR} \varphi(x)^{b_i}dx \leq  \prod_{i=1}^{k}m_{b_i} \int_{\bR} |\varphi(x)|^{b_i}dx .
\]

Next, we provide an estimate for $m_{b_i}$, based on $m_2$ and $m_p$.
Any $r \in \{3,\ldots,p-1\}$ can be written as $r=p\theta+2(1-\theta)$ for some $\theta \in (0,1)$.  By H\"older's inequality,
\[
m_r=\int_{\bR_0} \big(|z|^{p} \big)^{\theta} \big(|z|^{2} \big)^{1-\theta}\nu(dz) \leq 
\Big(\int_{\bR_0} |z|^{p} \nu(dz) \Big)^{\theta} \Big(\int_{\bR_0}|z|^{2} \nu(dz)\Big)^{1-\theta}=m_{p}^{\theta}m_2^{1-\theta}.
\]
This inequality holds trivially for $r=2$ (when $\theta=0$) and  $r=p$ (when $\theta=1$). Hence
\[
m_r \leq m_{p}^{\theta} m_2^{1-\theta}  \quad \mbox{for any $r \in \{2,\ldots,p\}$, with $\theta=\frac{r-2}{p-2}$.} 
\]

Similarly, for any $r \in \{2,\ldots,p\}$, letting $\theta=\frac{r-2}{p-2}$, we have:
\[
\int_{\bR} |\varphi(x)|^r dx \leq \left( \int_{\bR} |\varphi(x)|^p dx \right)^{\theta}
\left( \int_{\bR} |\varphi(x)|^2 dx \right)^{1-\theta}. 
\]

Using this estimate, we obtain that
\begin{align}
\nonumber
\prod_{B \in \pi} \k_{|B|} & \leq  \prod_{i=1}^{k}\left(m_{p}  \int_{\bR} |\varphi(x)|^p dx\right)^{\frac{b_i-2}{p-2}} \left(m_{2} \int_{\bR} |\varphi(x)|^2 dx\right)^{\frac{p-b_i}{p-2}}\\
\nonumber
&= \Big[\Big(m_2 \int_{\bR} |\varphi(x)|^2 dx \Big)^{\frac{p}{2}}\Big]^{\frac{2k-2}{p-2}} \cdot \big(m_p \int_{\bR} |\varphi(x)|^p dx \big)^{\frac{p-2k}{p-2}} \\
\nonumber
&\leq \frac{2k-2}{p-2} \Big(m_2 \int_{\bR} |\varphi(x)|^2 dx \Big)^{p/2}+\frac{p-2k}{p-2} m_p \int_{\bR} |\varphi(x)|^p dx\\
\label{mom-LA2}
&\leq  \Big(m_2  \int_{\bR} |\varphi(x)|^2 dx\Big)^{p/2}+m_p  \int_{\bR} |\varphi(x)|^p dx,
\end{align}
using the inequality $x^a y^{1-a} \leq ax+(1-a)y$ for any $a \in [0,1]$ and $x,y>0$.

Relation \eqref{p-mom-L} follows from \eqref{mom-LA1} and \eqref{mom-LA2}.
\end{proof}

Next, we will extend this moment inequality to simple processes. For this, we use {\em Rosenthal's inequality} for discrete-time martingales, which we recall below. Let $(S_k)_{k=0,1,\ldots,n}$ be a martingale with respect to a filtration $(\cF_k)_{k=0,1,\ldots,n}$, such that $S_0=0$. Let $X_i=S_i-S_{i-1}$ for all $i=1,\ldots,n$. If $p \geq 2$ is such that $\bE|X_i|^p<\infty$ for all $i=1,\ldots,n$, then
\begin{equation}
\label{ros}
\bE\big[\max_{k\leq n}|S_k|^p\big] \leq \cB_p^p \left\{ \bE \Big[ \Big( \sum_{i=1}^n \bE[X_i^2|\cF_{i-1}] \Big)^{p/2} \Big]+\sum_{i=1}^{n}\bE|X_i|^p \right\},
\end{equation}
where $\cB_p>0$ is a constant depending on $p$; see \cite{burkholder73, merlevede-peligrad13}.

\begin{lemma}
\label{lem-simple}
Let $p \geq 2$ be an even integer such that $m_{p}<\infty$.
For any $X \in \cE$,
\[
\bE\left|\int_{\bR}X(x)L(dx) \right|^p \leq \cC_p^p \left\{
\bE\left[ \left( \int_{\bR}|X(x)|^2 dx\right)^{p/2}\right]+ \bE \int_{\bR}|X(x)|^p dx\right\},
\]
where $\cC_p^p=2 \cB_p C_p^* (m_2^{p/2} \vee m_p)$, and $\cB_p,C_p^*$ are the constants from \eqref{ros}, respectively \eqref{p-mom-L}.
\end{lemma}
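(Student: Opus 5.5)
Write $X(x)=\sum_{i=1}^n Y_i 1_{(b_{i-1},b_i]}(x)$ as in \eqref{simple}, so that $\int_{\bR}X(x)L(dx)=\sum_{i=1}^n Y_i L((b_{i-1},b_i])$ (take $K$ large enough that all $b_i \in [-K,K]$). The plan is to apply Rosenthal's inequality \eqref{ros} to the discrete martingale
\[
S_k=\sum_{i=1}^k X_i, \qquad X_i:=Y_i L\big((b_{i-1},b_i]\big),
\]
with the filtration $\cG_k:=\cF_{b_k}$, $k=0,\ldots,n$. The martingale property holds because $Y_i$ is bounded and $\cF_{b_{i-1}}$-measurable, while $L((b_{i-1},b_i])$ is centered and independent of $\cF_{b_{i-1}}$. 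This independence comes from the independence of $N$ on disjoint regions of $\bR\times\bR_0$. Moreover $\bE|X_i|^p<\infty$ by boundedness of $Y_i$ and Lemma \ref{lem-p-mom-L}.

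Both terms on the right of \eqref{ros} are computed using the same independence. For the conditional variances,
\[
\bE[X_i^2\mid\cG_{i-1}]=Y_i^2\,\bE|L((b_{i-1},b_i])|^2=m_2 Y_i^2 (b_i-b_{i-1}).
\]
Summing gives $m_2\int_{\bR}|X(x)|^2dx$, so the first Rosenthal term equals $m_2^{p/2}\,\bE[(\int|X|^2dx)^{p/2}]$. For the $p$-th moments, conditioning on $\cG_{i-1}$ and applying Lemma \ref{lem-p-mom-L} with $A=(b_{i-1},b_i]$ gives
\[
\bE|X_i|^p=\bE|Y_i|^p\,\bE|L(A)|^p\le C_p^*\Big\{m_2^{p/2}\,\bE|Y_i|^p (b_i-b_{i-1})^{p/2}+m_p\,\bE|Y_i|^p (b_i-b_{i-1})\Big\}.
\]

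Next I sum over $i$. The second piece sums exactly to $m_p\,\bE\int|X|^pdx$. For the first piece, $\sum_i |Y_i|^p (b_i-b_{i-1})^{p/2}=\sum_i \big(|Y_i|^2(b_i-b_{i-1})\big)^{p/2}$. Since $p/2\ge1$, we have $\sum a_i^{p/2}\le(\sum a_i)^{p/2}$ for $a_i\ge 0$, so this piece is at most $(\int|X|^2dx)^{p/2}$. This superadditivity step is the only point requiring some care: it is what lets the ``Gaussian'' part of Lemma \ref{lem-p-mom-L} be absorbed into the quadratic-variation term rather than producing an extra term.

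Collecting everything, we get
\[
\bE|S_n|^p\le\cB_p^p\Big\{(m_2^{p/2}+C_p^*m_2^{p/2})\,\bE\Big[\Big(\int|X|^2\Big)^{p/2}\Big]+C_p^*m_p\,\bE\int|X|^p\Big\}.
\]
Since $C_p^*\ge1$, both coefficients are at most $2C_p^*(m_2^{p/2}\vee m_p)$, which yields the stated bound with the constant $\cC_p^p$ (reading the $\cB_p$ in the statement's constant as the Rosenthal factor).
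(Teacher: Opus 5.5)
Your proof is correct and follows the paper's argument essentially step for step. Both use Rosenthal's inequality for the discrete martingale $S_k=\sum_{i\le k}Y_iL((b_{i-1},b_i])$ with respect to $(\cF_{b_k})$, the conditional variance $m_2Y_i^2(b_i-b_{i-1})$, Lemma \ref{lem-p-mom-L} for $\bE|L((b_{i-1},b_i])|^p$, and the superadditivity bound $\sum_i\big(|Y_i|^2(b_i-b_{i-1})\big)^{p/2}\le\big(\sum_i|Y_i|^2(b_i-b_{i-1})\big)^{p/2}$. Your reading of the constant is also the right one, since the paper's own chain of inequalities produces the factor $\cB_p^p$ from \eqref{ros}, not $\cB_p$.
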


\begin{proof}
Let $X$ be a simple process of form \eqref{simple}. Then 
\[
\int_{\bR}X(x)L(dx)=\sum_{i=1}^n Y_i L(A_i) \quad \mbox{where} \quad A_i=(b_{i-1},b_i].
\] 
Let $S_k=\sum_{i=1}^k Y_i L(A_i)$ for $k=1,\ldots,n$ and $S_0=0$. Then $(S_k)_{k=0,\ldots,n}$ is a zero-mean martingale with respect to $(\cF_{b_k})_{k=0,\ldots,n}$: 
for any $i=1,\ldots,n$,
\[
\bE[Y_i L(A_i)|\cF_{b_{i-1}}]=Y_i \bE[L(A_i)|\cF_{b_{i-1}}]=Y_i \bE[L(A_i)]=0,
\] 
since $L(A_i)$ is independent of $\cF_{b_{i-1}}$. Similarly, $\bE \big[Y_i^2 L(A_i)^2|\cF_{b_{i-1}}\big] =Y_i^2 \bE|L(A_i)|^2=Y_i^2 m_2 |A_i|$.
Then, using Rosenthal's inequality \eqref{ros}, followed by estimate \eqref{p-mom-L}, we have:
\begin{align*}
\bE|S_n|^p & \leq \cB_p^p \left\{ \bE \Big[ \Big( \sum_{i=1}^n \bE \big[Y_i^2 L(A_i)^2|\cF_{b_{i-1}}\big] \Big)^{p/2} \Big]+\sum_{i=1}^{n}\bE \big|Y_i L(A_i)\big|^p \right\}\\
&=\cB_p^p \left\{ m_2^{p/2} \bE \Big[ \Big( \sum_{i=1}^n |Y_i|^2 |A_i|\Big)^{p/2} \Big]+\sum_{i=1}^{n} \bE |Y_i |^p \, \bE| L(A_i)|^p \right\} \\
& \leq \cB_p^p \left\{ m_2^{p/2} \bE \Big[ \Big( \sum_{i=1}^n  |Y_i|^2 |A_i|\Big)^{p/2} \Big]+\cC_p^* \sum_{i=1}^{n} \bE |Y_i |^p \big[ (m_2 |A_i|)^{p/2}+ m_p |A_i| \big] \right\}.
\end{align*}
Using the fact that $\bE \big[\sum_{i=1}^n |Y_i|^p |A_i|^{p/2}\big] \leq \bE\big[\big( \sum_{i=1}^n |Y_i|^2 |A_i| \big)^{p/2}\big]$, we obtain that
\begin{align*}
\bE|S_n|^p & \leq \cC_p^p \left\{ \bE \Big[ \Big( \sum_{i=1}^n  |Y_i|^2 |A_i|\Big)^{p/2} \Big]+ \sum_{i=1}^{n}\bE|Y_i|^p |A_i|\right\}.
\end{align*}

On the other hand, since the sets $A_1,\ldots,A_n$ are disjoint, 
\[
\int_{\bR}|X(x)|^2 dx=\sum_{i=1}^{n} Y_i^2|A_i| \quad \mbox{and} \quad \int_{\bR}|X(x)|^p dx=\sum_{i=1}^{n} Y_i^p|A_i|.
\]
The conclusion follows.
\end{proof}

Finally, we extend the moment inequality to the stochastic integral of a general process.
From Lemma \ref{lem-simple}, we infer that for any even integer $p\geq 2$ such that $m_p<\infty$,
\[
\|I(X)\|_p \leq \cC_p \big[X\big]_p \quad \mbox{for any $X\in \cE$},
\]
where
\begin{align*}
\big[X\big]_p 
& :=\|X\|_{L^p(\Omega;L^2(\bR))}+\|X\|_{L^p(\Omega \times \bR)}.
\end{align*}

Let $\cL_p^*$ be the set of predictable processes $X$ such that $\big[X \big]_p<\infty$. Then $\cL_p^* \subseteq \cL_2^*$, since $\|X\|_{L^2(\Omega;H)} \leq \|X\|_{L^p(\Omega;H)}$ with $H=L^2(\bR)$.
Similarly, $\cL_p \subseteq \cL_2$, where $\cL_p$ is the set of predictable processes $X$ such that $\big[ X \big]_{K,p}<\infty$ for all $K>0$, and
\[
\big[X\big]_{K,p}:=\|X\|_{L^p(\Omega;L^2([-K,K]))}+\|X\|_{L^p(\Omega \times [-K,K])}.
\]


The next result provides an extension of Lemma \ref{lem-simple} to processes in $\cL_p$.

\begin{lemma}
\label{lem-simple-p}
Let $p \geq 2$ be an even integer such that $m_p<\infty$. If $X \in \cL_p$, then
\begin{equation}
\label{p-mom-X}
\|I_K(X)\|_p \leq \cC_p \big[ X\big]_{K,p} \quad \mbox{for any} \ K>0.
\end{equation}
\end{lemma}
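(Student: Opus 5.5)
The plan is an approximation argument. First, I would apply Lemma \ref{lem-simple} to processes of the form $X1_{[-K,K]}$ with $X\in\cE$. Such a process is again simple: intersect each interval $(b_{i-1},b_i]$ with $[-K,K]$, noting that $Y_i$ is $\cF_{b_{i-1}}$-measurable and hence measurable with respect to the later $\sigma$-field at the new left endpoint. Since $I_K(X)=I(X1_{[-K,K]})$, this gives
\[
\|I_K(X)\|_p\leq \cC_p \big[X\big]_{K,p}\quad\mbox{for all } X\in\cE .
\]
This settles the simple case.

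Second, for $X\in\cL_p$ I need simple processes $X_m$ with $[X_m-X]_{K,p}\to0$ for every $K>0$. This is the $L^p$ analogue of Lemma \ref{approx}, and I expect it to be the main obstacle. I would argue in two stages.

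\emph{Truncation.} Set $X^{(N)}=(X\wedge N)\vee(-N)$. This is predictable and bounded, and $|X^{(N)}-X|\leq |X|$ with pointwise convergence to $0$. Dominated convergence then handles both norms: for the first term, $\int_{-K}^K|X^{(N)}-X|^2dx\to0$ a.s. with $p/2$-th power dominated by $(\int_{-K}^K|X|^2dx)^{p/2}\in L^1(\Omega)$.

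\emph{Bounded case.} For bounded predictable $X$, the argument of Lemma 2.4, Chapter 3 of \cite{KS} (as used in Lemma \ref{approx}) yields uniformly bounded simple $X_m$ with $\bE\int_{-K}^K|X_m-X|^2dx\to0$. Passing to a subsequence, $\int_{-K}^K|X_m-X|^2dx\to0$ in probability. Uniform boundedness then upgrades this to convergence in $L^{p/2}(\Omega)$, and similarly $\bE\int_{-K}^K|X_m-X|^pdx\to0$.

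A diagonal argument over $K\in\mathbb{N}$ and $N$ produces a single sequence that works for all $K$. If the monotone-class route via \cite{KS} is awkward, I would instead run a monotone class argument directly on the class of bounded predictable processes approximable in $[\cdot]_{K,p}$.

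Third, I pass to the limit. Since $\cL_p\subseteq\cL_2$ and $[\cdot]_{K,2}\leq C_K[\cdot]_{K,p}$ (by Hölder), the same sequence satisfies $[X_m-X]_{K,2}\to0$. By definition of the integral, $I_K(X_m)\to I_K(X)$ in $L^2(\Omega)$, and the limit does not depend on the approximating sequence. By linearity and the simple-case bound, $\|I_K(X_m)-I_K(X_n)\|_p\leq \cC_p[X_m-X_n]_{K,p}\to0$. Hence $I_K(X_m)$ is Cauchy in $L^p$, and its $L^p$ limit coincides with the $L^2$ limit $I_K(X)$. Letting $m\to\infty$ in
\[
\|I_K(X_m)\|_p\leq\cC_p[X_m]_{K,p}
\]
and using $[X_m]_{K,p}\to[X]_{K,p}$ (triangle inequality) gives \eqref{p-mom-X}. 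Alternatively, Fatou's lemma along an a.s. convergent subsequence gives the same conclusion.
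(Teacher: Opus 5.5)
Your proposal is correct and follows essentially the same route as the paper: approximate $X$ by simple processes in $[\cdot]_{K,p}$, use $[\cdot]_{K,2}\leq[\cdot]_{K,p}$ to identify the $L^2$-limit of $I_K(X_m)$ with $I_K(X)$, use Lemma~\ref{lem-simple} to show $\{I_K(X_m)\}$ is Cauchy in $L^p(\Omega)$, and let $m\to\infty$ in $\|I_K(X_m)\|_p\leq\cC_p[X_m]_{K,p}$. Your truncation, bounded-case and diagonal construction, and the reduction $I_K(X)=I(X1_{[-K,K]})$, supply details the paper leaves implicit. Strictly speaking, $X1_{[-K,K]}$ is simple only up to the single point $-K$; this is harmless because that point is Lebesgue-null and $L(\{-K\})=0$ a.s.
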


\begin{proof}
Similarly to Lemma \ref{approx}, it can be proved that there exists a sequence $(X_m)_{m\geq 1}$ of simple processes such that $\big[X_m-X \big]_{K,p} \to 0$ as $m \to \infty$, for any $K>0$. 

Fix $K>0$. Since $\big[\cdot \big]_{K,2} \leq \big[ \cdot \big]_{K,p}$, it follows that
$\big[X_m-X \big]_{K,2} \to 0$ as $m \to \infty$. Hence $I_K(X_m) \to I_K(X)$ in $L^2(\Omega)$ as $m \to \infty$. On the other hand, $\{I_K(X_m)\}_{m\geq 1}$ is a Cauchy sequence in $L^p(\Omega)$, since by Lemma \ref{lem-simple}, 
\[
\|I_K(X_m)-I_K(X_n)\|_p \leq \cC_p \big[X_n-X_m \big]_{K,p} \to 0 \quad \mbox{as $n,m \to \infty$}.
\]

Therefore, $\{I_K(X_m)\}_{m\geq 1}$ converges in $L^p(\Omega)$. Its limit must be $I_K(X)$. By Lemma \ref{lem-simple},
\[
\|I_K(X_m)\|_p \leq \cC_p \big[ X_m \big]_{K,p} \quad \mbox{for any} \ m\geq 1.
\] 
The conclusion follows letting $m \to \infty$.
\end{proof}

Finally, we provide an extension of this result to the stochastic integral on $\bR$.

\begin{theorem}
\label{p-mom-X1}
Let $p \geq 2$ be an even integer such that $m_p<\infty$. For any $X \in \cL_p^*$,
\begin{equation}
\label{IXp}
\|I(X)\|_p \leq \cC_p \big[ X\big]_{p}.
\end{equation}
\end{theorem}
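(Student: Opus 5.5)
The plan is to deduce the estimate from Lemma \ref{lem-simple-p} by letting $K\to\infty$. Fix $X\in\cL_p^*$. Since $[X]_{K,p}\leq [X]_p<\infty$ for every $K>0$, we have $X\in\cL_p$. Lemma \ref{lem-simple-p} then gives, for every $K>0$,
\[
\|I_K(X)\|_p \leq \cC_p [X]_{K,p} \leq \cC_p [X]_p .
\]
The monotonicity $[X]_{K,p}\le [X]_p$ is immediate, because both the $L^p(\Omega;L^2([-K,K]))$-norm and the $L^p(\Omega\times[-K,K])$-norm increase in $K$ and are dominated by the corresponding norms over $\bR$.

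Next, recall that $I(X)$ was defined as the $L^2(\Omega)$-limit of $I_K(X)$ as $K\to\infty$; this is legitimate because $\cL_p^*\subseteq\cL_2^*$. Take a sequence $K_n\uparrow\infty$. Then $I_{K_n}(X)\to I(X)$ in $L^2(\Omega)$, so along a subsequence $I_{K_n}(X)\to I(X)$ almost surely. Fatou's lemma then yields
\[
\bE|I(X)|^p\leq \liminf_{n}\bE|I_{K_n}(X)|^p\leq \cC_p^p [X]_p^p,
\]
which is \eqref{IXp}.

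The only point needing care is that convergence holds only in $L^2$ while the bound is in $L^p$. Fatou along an a.s.\ convergent subsequence handles this without requiring $L^p$-convergence. Alternatively, one could show $\{I_K(X)\}$ is Cauchy in $L^p$ by applying Lemma \ref{lem-simple-p} with $K'$ to $X1_{\{K<|x|\le K'\}}$ and using property (iii). The tail norm $[X1_{\{K<|x|\leq K'\}}]_{K',p}$ tends to $0$ by dominated convergence, since $\int_{K<|x|\le K'}|X|^2dx\le\int_\bR |X|^2dx\in L^{p/2}(\Omega)$, and similarly for the $L^p(\Omega\times\bR)$ part. The Fatou route is simpler, and it is the one I would write.
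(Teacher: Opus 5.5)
Your proof is correct. It differs from the paper only in how it passes to the limit $K\to\infty$. The paper uses exactly the route you mention as your alternative. It applies Lemma \ref{lem-simple-p} on $[-K',K']$ to $X-X1_{[-K,K]}$, using linearity and property (iii), to show that $\{I_K(X)\}_{K>0}$ is Cauchy in $L^p(\Omega)$. It then identifies the $L^p$-limit with the $L^2$-limit $I(X)$ and lets $K\to\infty$ in \eqref{p-mom-X}.

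Your argument needs only the uniform bound $\|I_K(X)\|_p\le \cC_p[X]_{K,p}\le \cC_p[X]_p$ together with Fatou's lemma along an a.s.\ convergent subsequence. It therefore dispenses with the tail estimate, including the dominated-convergence justification that the paper leaves implicit. It also avoids invoking linearity and the localization property (iii) for processes in $\cL_p$.

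In exchange, the paper's route gives the stronger conclusion that $I_K(X)\to I(X)$ in $L^p(\Omega)$, not merely in $L^2(\Omega)$. That extra conclusion is not needed for \eqref{IXp} itself, so your Fatou argument is the more economical proof of the statement as posed.
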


\begin{proof}
Note that $\{I_K(X)\}_{K>0}$ is Cauchy in $L^p(\Omega)$: 
\begin{align*}
& \|I_{K'}(X)-I_K(X)\|_p =\|I_{K'}(X)-I_{K'}(X1_{[-K,K]})\|_p \leq \cC_p \big[X-X 1_{[-K,K]}\big]_{K',p}
\\
& \quad = \cC_p \left\{ \left\{\bE\left[ \Big(\int_{\{K<|x| \leq K'\}}|X(x)|^2 dx \Big)^{p/2} \right] \right\}^{1/p} + \left\{ \bE \int_{\{K<|x|\leq K'\}}|X(x)|^p dx \right\}^{1/p}\right\} \to 0,
\end{align*}
as $K,K' \to \infty$ and $K'>K$. Therefore $\{I_K(X)\}_{K>0}$ converges in $L^p(\Omega)$ as $K \to \infty$. Its limit must be $I(X)$ (since $X \in \cL_2^*$). The conclusion follows letting $K \to \infty$ in \eqref{p-mom-X}.
\end{proof}

As an application of Theorem \ref{p-mom-X1}, we obtain the following result for a mixed Lebesgue-stochastic convolution, which contains $ds$ and $L(dy)$. 

\begin{theorem}
\label{p-mom-X2}
Let $p\geq 2$ be an even integer such that $m_p<\infty$. Let $\Phi=\{\Phi(t,x);t\geq 0,x\in \bR\}$ be a random field such that $(\omega,x,t) \mapsto \Phi(\omega,t,x)$ is $\cP \otimes \cB(\bR_{+})$-measurable, and
\[
\sup_{t\leq T} \sup_{x\in \bR} \bE|\Phi(t,x)|^p<\infty \quad \mbox{for any $T>0$}.
\]
Let $(t,x)\mapsto \cG_t(x)$ be a measurable function on $\bR_{+} \times \bR$ such that 
\begin{equation}
\label{def-nuT}
\nu_T:=\int_0^T \int_{\bR} |\cG_{t}^p(x)|dxdt<\infty \quad \mbox{for any $T>0$}.
\end{equation}
Then, for any $t>0$ and $x \in \bR$,
\[
\bE \left| \int_0^t \int_{\bR} \cG_{t-s}(x-y) \Phi(s,y) ds L(dy) \right|^p \leq B_{t,p}^p
\int_0^t \int_{\bR} \big(\cG_{t-s}^2(x-y)+\cG_{t-s}^p(x-y)\big) \bE|\Phi(s,y)|^p dyds,
\]
where 
\begin{equation}
\label{def-Btp}
B_{t,p}^p=2^{p-1}\cC_p^p \big(t^{p/2}\nu_t^{p/2-1} + t^{p-1} \big),
\end{equation} 
and $\cC_p$ is the constant from \eqref{IXp}.
\end{theorem}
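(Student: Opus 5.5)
Our plan is to view the mixed integral as a time average of stochastic integrals. By Fubini, for fixed $t,x$,
\[
\int_0^t \int_{\bR} \cG_{t-s}(x-y)\Phi(s,y)\,ds\,L(dy)=\int_0^t I\big(X_s\big)\,ds, \qquad X_s(y):=\cG_{t-s}(x-y)\Phi(s,y).
\]
This is either the definition of the mixed integral or a consequence of a stochastic Fubini argument. It is justified because $y\mapsto X_s(y)$ is predictable for each $s$ (by the $\cP\otimes\cB(\bR_+)$-measurability) and lies in $\cL_p^*$ for a.e.\ $s$. Jensen's inequality (Hölder in $ds$) then gives
\[
\bE\Big|\int_0^t I(X_s)ds\Big|^p \le t^{p-1}\int_0^t \bE|I(X_s)|^p ds .
\]
To each $\bE|I(X_s)|^p$ we apply Theorem \ref{p-mom-X1} together with $(a+b)^p\le 2^{p-1}(a^p+b^p)$. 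This yields
\[
\bE|I(X_s)|^p\le 2^{p-1}\cC_p^p\Big\{\bE\Big[\Big(\int_{\bR}\cG_{t-s}^2(x-y)\Phi^2(s,y)dy\Big)^{p/2}\Big]+\int_{\bR}|\cG_{t-s}(x-y)|^p\bE|\Phi(s,y)|^pdy\Big\}.
\]
The second term directly produces the $\cG^p$ contribution, with prefactor $t^{p-1}$.

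The first (square-function) term is the main obstacle, since we must put the expectation inside with only $\bE|\Phi|^p$ appearing and a $\cG^2$ weight. Moreover, the target constant $t^{p/2}\nu_t^{p/2-1}$ suggests that we should not apply Jensen in $s$ first to the full term. Instead, we would treat this part before the time Jensen: by Minkowski (or just Hölder in $(s,y)$ jointly), we bound
\[
\Big(\int_0^t\!\Big(\int_{\bR}\cG^2\Phi^2dy\Big)^{1/2}ds\Big)^{p}\le t^{p/2}\Big(\int_0^t\!\int_{\bR}\cG^2\Phi^2\,dy\,ds\Big)^{p/2}.
\]
Then we write $\cG^2\Phi^2=|\cG|^{2(p-2)/(p-2)\cdot\frac{?}{}}\cdots$. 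Concretely, we apply Hölder to $\int\!\int \cG^2\Phi^2$ with exponents $p/2$ and $p/(p-2)$, splitting $\cG^2=|\cG|^{2\cdot\frac{2}{p}}\cdot|\cG|^{2\cdot\frac{p-2}{p}}$, i.e.\ writing it as $(\cG^{2}|\Phi|^p)^{2/p}\cdot(\cG^{2})^{(p-2)/p}$. This gives
\[
\Big(\int\!\!\int \cG^2\Phi^2\Big)^{p/2}\le \Big(\int\!\!\int \cG^2|\Phi|^p\Big)\Big(\int\!\!\int \cG^2\Big)^{p/2-1}.
\]
If $\nu_t$ is to control $\int\!\!\int\cG^2$ (perhaps after a more careful choice of splitting, e.g.\ using $\cG^p$ in the second factor), the factor $\nu_t^{p/2-1}$ appears. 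Taking expectations then leaves $\int_0^t\!\int\cG^2\bE|\Phi|^p$.

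The care point is arranging the Hölder splitting so that the leftover deterministic factor is exactly $\int_0^t\!\int|\cG|^p=\nu_t$ rather than $\int\!\int\cG^2$. If the direct version does not close, we would first apply Minkowski's integral inequality in $L^{p/2}(\Omega)$ to pull $\bE$ inside: $\|\int\!\int\cG^2\Phi^2\|_{p/2}\le\int\!\int\cG^2\|\Phi\|_p^2$. We would then use Hölder in $(s,y)$ against the measure $\cG^2dyds$, and accept whatever deterministic constant results, adjusting it to match \eqref{def-Btp}. Finally, the finiteness of the right side, given $\sup\bE|\Phi|^p<\infty$ and \eqref{def-nuT}, also confirms that $X_s\in\cL_p^*$, which justifies the use of Theorem \ref{p-mom-X1}.
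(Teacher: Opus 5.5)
Your ingredients for the square-function term (Minkowski in $L^{p/2}(\Omega)$, then H\"older against $\cG^2_{t-s}(x-y)\,dy\,ds$) are the right ones, but the chain you build around them does not close. You write the left side as $\int_0^t I(X_s)\,ds$, apply Jensen in $s$, and then apply Theorem \ref{p-mom-X1} slice by slice. The square-function term you are then left with is $t^{p-1}\int_0^t \bE\big[\big(\int_{\bR}\cG_{t-s}^2(x-y)\Phi^2(s,y)\,dy\big)^{p/2}\big]ds$. From it you can only extract a factor $\|\cG_{t-s}\|_{L^2(\bR)}^{p-2}$ inside the $ds$-integral, and this loss is real. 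Take $p=4$, $\Phi\equiv 1$, $\cG_s(y)=1_{[0,s^{-1/2}]}(y)$: this term equals $t^3\int_0^t (t-s)^{-1}ds=\infty$, while $\int_0^t\int_{\bR}(\cG_s^2+\cG_s^4)\,dy\,ds=4\sqrt{t}<\infty$.

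Your remedy of ``treating this part before the time Jensen'' is not available inside your chain. The square function only appears once the moment bound has been applied, and you applied it to each $I(X_s)$ after Jensen. So the quantity $\bE\big(\int_0^t\|X_s\|_{L^2(\bR)}ds\big)^p$ that you go on to bound never actually arises.

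The missing step is to apply Theorem \ref{p-mom-X1} once, to $\Psi(y)=\int_0^t\cG_{t-s}(x-y)\Phi(s,y)\,ds$. This $\Psi$ is predictable by Fubini, and the left side is $I(\Psi)$, so neither a stochastic Fubini theorem nor Jensen on stochastic integrals is needed. All time integration is then done inside deterministic norms:
\begin{itemize}
\item start from $\|\Psi(y)\|_p\le\int_0^t|\cG_{t-s}(x-y)|\,\|\Phi(s,y)\|_p\,ds$;
\item for the $L^p(\Omega\times\bR)$ term, apply H\"older in $s$;
\item for the other term, use $\|\int\Psi^2dy\|_{p/2}\le\int\|\Psi(y)\|_p^2dy$, then Cauchy--Schwarz in $s$, then H\"older against $\cG^2\,dy\,ds$.
\end{itemize}
This is the paper's proof. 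Your pathwise display would also be legitimate in this setting, via $\|\Psi\|_{L^2(\bR)}\le\int_0^t\|X_s\|_{L^2(\bR)}ds$.

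On $\nu_t$: your suspicion is right, and no cleverer H\"older splitting will produce $\int_0^t\int|\cG_s|^p$. The paper's proof performs exactly your H\"older step and asserts that $\cG^2_{t-s}(x-y)\,dy\,ds$ has total mass $\nu_t$. That holds only if $\nu_t:=\int_0^t\int_{\bR}\cG_s^2(y)\,dy\,ds$.

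With $\nu_t$ as printed, the inequality is false for $p\geq 4$. Take $p=4$, $t=1$, $\Phi\equiv 1$, and $\cG_s(y)=\epsilon 1_{[0,M]}(y)$ with $M=\epsilon^{-4}$. Then $I(\Psi)=\epsilon L([x-M,x])$, so the left side is $\epsilon^4(m_4M+3m_2^2M^2)\geq 3m_2^2\epsilon^{-4}$. On the other hand $\nu_1=1$, and the right side equals $16\,\cC_4^4(\epsilon^{-2}+1)$, which is smaller for small $\epsilon$.

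So rather than ``adjusting the constant to match'', you should prove the bound with the $L^2$ version of $\nu_t$. Finiteness of $\int_0^t\int\cG_s^2$ is also exactly what you need for $\Psi\in\cL_p^*$, since the $L^p(\Omega;L^2(\bR))$ part of $[\Psi]_p$ must be finite. The $L^p$ hypothesis on $\cG$ does not give this, so your closing claim that finiteness of the right side certifies membership in $\cL_p^*$ does not hold as stated.
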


\begin{proof}
Let $\Psi(y)=\int_0^t \cG_{t-s}(x-y)\Phi(s,y)ds$
By Fubini theorem, $\Psi$ is predictable. By H\"older's inequality,
\[
\bE \int_{\bR}|\Psi(y)|^p dy \leq t^{p-1} \int_0^t  \int_{\bR}|\cG_{t-s}^p(x-y)|\bE|\Phi(s,y)|^pdyds<\infty.
\]
Hence, $\Psi \in \cL_p^*$.
By Theorem \ref{p-mom-X1},
\[
\bE|I(\Psi)|^p \leq 2^{p-1}\cC_p^p
\Big(\|\Psi\|_{L^p(\Omega;L^2(\bR))}^p+\|\Psi\|_{L^p(\Omega \times \bR)}^p \Big)=:2^{p-1}\cC_p^p(T_1+T_2).
\]

We estimate separately the two terms. First note that,
\[
\| \Psi(y)\|_p \leq \int_0^t |\cG_{t-s}(x-y)| \|\Phi(s,y)\|_p ds
\]
and hence, by H\"older's inequality,
\[
\| \Psi(y)\|_p^2  \leq t \int_0^t \cG_{t-s}^2 (x-y) \|\Phi(s,y)\|_p^2 ds, 
\quad 
\| \Psi(y)\|_p^p  \leq t^{p-1} \int_0^t |\cG_{t-s}^p (x-y)| \|\Phi(s,y)\|_p^p ds.
\]

By Minkowski's inequality, and H\"older's inequality with respect to the measure $\cG_{t-s}^2(x-y)dsdy$ on $[0,t] \times \bR$ (whose total measure is $\nu_t$),
we obtain that:
\begin{align*}
T_1 & =\left\| \int_{\bR} \Psi^2(y)dy\right\|_{p/2}^{p/2} \leq
\left( \int_{\bR} \|\Psi(y)^2\|_{p/2} dy \right)^{p/2} = \left( \int_{\bR} \|\Psi(y)\|_p^2 dy \right)^{p/2}\\
& \leq t^{p/2} \left( \int_0^t \int_{\bR} \cG_{t-s}^2 (x-y) \|\Phi(s,y)\|_p^2 dyds \right)^{p/2} \\
& \leq t^{p/2}\nu_{t}^{p/2-1} \int_0^t \int_{\bR} \cG_{t-s}^2 (x-y) \|\Phi(s,y)\|_p^{p} dyds.
\end{align*}

For $T_2$, we have:
\begin{align*}
T_2 & =\int_{\bR}\|\Psi(y)\|_p^p dy \leq t^{p-1} \int_0^t \int_{\bR} \cG_{t-s}^p(x-y) \|\Phi(s,y)\|_p^p dyds.
\end{align*}
\end{proof}

\section{Relation with  Hitsuda-Skorohod integral}
\label{section-Malliavin}

In this section, we show that the It\^o integral coincides with the Hitsuda-Skorohod integral for predictable integrands.

We begin by recalling some basic material about Malliavin calculus with respect to the compensated Poisson random measure $\widehat{N}$. Let $\cH=L^2(Z,\cZ,\fm)$, where
\[
({\bf Z},\cZ)=\big(\bR \times \bR_0,\ \cB(\bR) \otimes \cB(\bR_0)\big).
\]

$\bullet$ {\bf Chaos Expansion.}
Any random variable $F \in L^2(\Omega)$ which is $\cF^N$-measurable has the {\em Poisson-chaos expansion}:
\begin{equation}
\label{Poisson-chaos}
F=\bE(F)+\sum_{n\geq 1}I_n(f_n), \quad \mbox{for some $f_n \in \cH^{\odot n}$}.
\end{equation}
This series is orthogonal in $L^2(\Omega)$. Here $I_n$ is the multiple integral with respect to $\widehat{N}$ and $\cH^{\odot n}$ is the set of symmetric functions in $\cH^{\otimes n}$.
For any $f \in \cH^{\otimes n}$,
\[
\bE[I_n(f)]=0 \quad \mbox{and} \quad \bE|I_n(f)|^2=n! \|\widetilde{f}\|_{\cH^{\otimes n}}^2,
\]
where $\widetilde{f}$ is the symmetrization of $f$. Moreover, $I_n(f)=I_n(\widetilde{f})$ for any $f \in \cH^{\otimes n}$.

\medskip 
$\bullet$ {\bf Malliavin Derivative.}
For any random variable $F \in L^2(\Omega)$ with chaos expansion \eqref{Poisson-chaos}, we define the {\em Malliavin derivative} of $F$ by:
\[
D_{\xi}F=\sum_{n\geq 1}nI_{n-1}\big(f_n(\cdot,\xi)\big), \quad \mbox{for all} \quad \xi \in {\bf Z},
\]
provided that 
\[
\bE\|DF\|_{\cH}^2=\sum_{n\geq 1}nn! \|\widetilde{f}_n\|_{\cH^{\otimes n}}^2<\infty.
\] In this case, we write $F \in {\rm dom}(D)$.

\medskip

$\bullet$ {\bf Hitsuda-Skorohod integral.}
Let $\delta:{\rm dom}(\delta) \to L^2(\bR)$ be the adjoint of $D$, where ${\rm dom}(\delta)$ is the set of $V \in L^2(\Omega;\cH)$ for which there exists a constant $C=C_V>0$ depending on $V$, such that 
\[
\big|\bE \langle DF,V \rangle_{\cH}\big| \leq C \|F\|_2 \quad \mbox{for any $F \in {\rm dom}(D)$}.
\] 
We say that $\delta(V)$ is 
{\em the Hitsuda-Skorohod integral} of $V$ with respect to $\widehat{N}$, and we write
\[
\delta(V)=\int_{\bR} \int_{\bR_0}V(x,z)\widehat{N}(\delta x, \delta z).
\]
By duality, for any $V \in {\rm dom}(\delta)$,
\[
\bE\langle DF,V \rangle_{\cH}=\bE[F \delta(V)] \quad \mbox{for any $F \in {\rm dom}(D)$}.
\]

\begin{lemma}
\label{cond-Ik}
Let $h \in \cH^{\otimes k}$ be arbitrary. Then, for any $y \in \bR$,
\[
\bE[I_k(h)|\cF_y]=I_k(h^y), 
\]
where $h^y(x_1,z_1,\ldots,x_k,z_k)=h(x_1,z_1,\ldots,x_k,z_k) 1_{(-\infty,y]^k}(x_1,\ldots,x_k)$.
\end{lemma}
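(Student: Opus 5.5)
By linearity of $I_k$ and of conditional expectation, and by the isometry $\bE|I_k(f)|^2=k!\|\widetilde{f}\|^2_{\cH^{\otimes k}}\leq k!\|f\|^2_{\cH^{\otimes k}}$, both sides of the identity are continuous in $h$ as maps from $\cH^{\otimes k}$ into $L^2(\Omega)$. For the right-hand side this uses $\|h^y\|_{\cH^{\otimes k}}\leq \|h\|_{\cH^{\otimes k}}$; for the left-hand side, the fact that conditional expectation is an $L^2$-contraction. It therefore suffices to verify the identity on a total subset of $\cH^{\otimes k}$. I would take elementary tensors
\[
h=1_{F_1}\otimes\cdots\otimes 1_{F_k},\qquad F_j=[a_j,b_j]\times\Gamma_j,
\]
with $\Gamma_j\in\cB_b(\bR_0)$ (so $\fm(F_j)<\infty$) and the sets $F_1,\ldots,F_k$ pairwise disjoint. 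Tensors of indicators of pairwise disjoint finite-measure rectangles span a dense subspace of $\cH^{\otimes k}$, since the diagonals are $\fm^{\otimes k}$-null ($\fm$ is atomless). For such $h$ we have $I_k(h)=\prod_{j=1}^k \widehat{N}(F_j)$.

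Next, reduce to the case where each rectangle lies entirely on one side of $y$. Split $F_j=F_j^-\cup F_j^+$ with $F_j^-=F_j\cap((-\infty,y]\times\bR_0)$ and $F_j^+=F_j\cap((y,\infty)\times\bR_0)$. Expanding multilinearly writes $h$ as a sum of tensors $1_{G_1}\otimes\cdots\otimes 1_{G_k}$ with each $G_j\in\{F_j^-,F_j^+\}$, and all the $G_j$ are still disjoint. For one such term, let $J=\{j: G_j=G_j^+\}$. The product $\prod_{j\notin J}\widehat{N}(G_j)$ is $\cF_y$-measurable. The product $\prod_{j\in J}\widehat{N}(G_j)$ depends only on the restriction of $N$ to $(y,\infty)\times\bR_0$, so it is independent of $\cF_y$ by the independence property of Poisson random measures on disjoint sets. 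Moreover, since the $G_j$, $j\in J$, are disjoint, this product has expectation $\prod_{j\in J}\bE\widehat{N}(G_j)=0$ when $J\neq\emptyset$. Consequently
\[
\bE\Big[\prod_j \widehat{N}(G_j)\,\Big|\,\cF_y\Big]=\prod_j \widehat{N}(G_j)\,1_{\{J=\emptyset\}}.
\]
The terms with $J=\emptyset$ sum exactly to $\prod_j\widehat{N}(F_j^-)=I_k(h^y)$, because $h^y=\bigotimes_j 1_{F_j^-}$. This proves the identity for elementary $h$, and the density argument then extends it to all $h\in\cH^{\otimes k}$.

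The main point needing care is the measurability and independence claim. One must check that $\widehat{N}(G)$ is $\cF_y$-measurable for $G\subset(-\infty,y]\times\bR_0$. The boundary point $x=y$ causes no trouble, since $\cF_y$ includes sets with $b\leq y$. Note also that $a_j$ may lie below any fixed level, which is fine because $\cF_y$ allows any $a>-\infty$. For independence, $\cF_y$ is generated by $N$ restricted to $(-\infty,y]\times\bR_0$ together with null sets, and this is independent of $N$ restricted to the complement. This follows from the complete independence of $N$ via a $\pi$–$\lambda$ argument. A second routine check is the density of the disjoint elementary tensors, which is standard for multiple Wiener–Itô integrals over an atomless measure.
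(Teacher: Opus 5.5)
Your proof is correct and follows essentially the same route as the paper: it restricts to a dense class of simple functions built on pairwise disjoint rectangles (the paper's class $\cE_k$, i.e.\ linear combinations over distinct indices of your elementary tensors), splits each $\widehat{N}(F_j)$ into its parts over $(-\infty,y]$ and $(y,\infty)$, expands the product, and uses that the right-hand parts are $\cF_y$-independent, mutually independent and centered, so only the fully left-hand term $\prod_j \widehat{N}(F_j^-)=I_k(h^y)$ survives. Two points that the paper leaves implicit are made precise in your version: the explicit $L^2$-continuity of both sides in $h$, which is needed for the density step, and the choice of $x$-intervals matching the generators of $\cF_y$, which makes the $\cF_y$-measurability of the left-hand parts immediate.
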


\begin{proof}
Let $\cE_k$ be the set of functions of the form
\begin{equation}
\label{def-h}
h(\xi_1,\ldots,\xi_k)=\sum_{i_1,\ldots,i_k=1}^{m}\beta_{i_1,\ldots,i_k} 1_{F_{i_1}\times\ldots \times F_{i_k}}(\xi_1,\ldots,\xi_k), \quad \mbox{with} \quad \xi_i=(x_i,z_i) \in {\bf Z},
\end{equation}
where $\beta_{i_1,\ldots,i_k}=0$ if $i_j=i_{\ell}$ for some $j \not=\ell$ and $F_1,\ldots,F_m\in \cZ$ are disjoint of the form $F_i=A_i \times B_i$ with $A_i \in \cB_b(\bR)$ and $B_i \in \cB_b(\bR_0)$. Thus, the sum in \eqref{def-h} is based only on {\em distinct} indices $i_1,\ldots, i_k$.
Since $\cE_k$ is dense in $\cH^{\otimes k}$,
it is enough to prove the result for $h \in \cE_k$. If $h$ is of form \eqref{def-h}, then
\[
I_k(h)=\sum_{i_1,\ldots,i_k=1}^{m}  \beta_{i_1,\ldots,i_k} \prod_{j=1}^{k}\widehat{N}(F_{i_j}),
\]
and
\[
\bE[I_k(h)|\cF_y]=\sum_{i_1,\ldots,i_k=1}^{m}\beta_{i_1,\ldots,i_k}\bE \Big[\prod_{j=1}^{k} \widehat{N}(F_{i_j}) \big|\cF_y \Big].
\]
For any $i=1,\ldots,m$, we write
\[
\widehat{N}(F_i)=\widehat{N}(A_i \times B_i)=\widehat{N}\big((A_i \cap (-\infty,y]) \times B_i\big)+
\widehat{N}\big((A_i \cap (y,\infty))  \times B_i\big)=:X_i+Y_i.
\]
Note that $X_1,\ldots,X_m$ are $\cF_y$-measurable, while $Y_1,\ldots,Y_m$ are independent of $\cF_y$. Moreover, $Y_1,\ldots,Y_m$ are independent, since the sets $F_1,\ldots,F_m$ are disjoint. 

Let $i_1,\ldots,i_k\in \{1,\ldots,m\}$ be distinct. Then
\[
\prod_{j=1}^{k} \widehat{N}(F_{i_j})=\prod_{j=1}^{k} \big(X_{i_j}+Y_{i_j}\big)=\sum_{J \subset \{1,\ldots,k\}} \prod_{j\in J} X_{i_j} \prod_{j \in J^c} Y_{i_j},
\]
and hence,
\[
\bE\Big[\prod_{j=1}^{k} \widehat{N}(F_{i_j})\big|\cF_y\Big]=\sum_{J \subset \{1,\ldots,k\}} \prod_{j\in J} X_{i_j} \bE\Big[\prod_{j \in J^c} Y_{i_j}\Big].
\]

If $J^c$ is non-empty, $\bE\Big[\prod_{j \in J^c} Y_{i_j}\Big]=\prod_{j \in J^c}\bE[Y_{i_j}]=0$ since $i_1,\ldots, i_k$ are distinct and $Y_1,\ldots,Y_m$ are independent. Therefore, the only term which contributes to the sum above if the one corresponding to $J=\{1,\ldots,k\}$. Hence,
\[
\bE[I_k(h)|\cF_y]=\sum_{i_1,\ldots,i_k=1}^{m}\beta_{i_1,\ldots,i_k}\prod_{j=1}^{k}
\widehat{N}\big((A_{i_j} \cap (-\infty,y]) \times B_{i_j}\big).
\]

On the other hand, $h^y(x_1,z_1,\ldots,x_k,z_k)=\sum_{i_1,\ldots,i_k=1}^{m}\beta_{i_1,\ldots,i_k} 
\prod_{j=1}^{k}1_{A_{i_j} \cap (-\infty,y]}(x_j) 1_{B_{i_j}}(z_j)$,
and hence,
\[
I_k(h^y)=\sum_{i_1,\ldots,i_k=1}^{m}\beta_{i_1,\ldots,i_k}  \widehat{N}\big( (A_{i_j} \cap (-\infty,y]) \times B_{i_j} \big).
\]
The conclusion follows.
\end{proof}

\begin{lemma}
\label{Fa-meas}
If $F \in {\rm dom}(D)$ is such that $F$ is $\cF_y$-measurable for some $y \in \bR$, then $D_{x,z}F=0$ for any $x>y$ and $z \in \bR_0$.
\end{lemma}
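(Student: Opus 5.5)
The natural route is through the chaos expansion and Lemma \ref{cond-Ik}. Write $F=\bE(F)+\sum_{n\geq 1}I_n(f_n)$ with $f_n \in \cH^{\odot n}$. Since $F$ is $\cF_y$-measurable, $F=\bE[F|\cF_y]$. Conditional expectation is a contraction on $L^2(\Omega)$, and the chaos series converges in $L^2(\Omega)$, so we may condition term by term. By Lemma \ref{cond-Ik},
\[
F=\bE(F)+\sum_{n\geq 1}I_n(f_n^y).
\]
Each $f_n^y$ is still symmetric, since the indicator $1_{(-\infty,y]^n}$ is symmetric. By uniqueness of the chaos expansion (orthogonality together with the isometry $\bE|I_n(f)|^2=n!\|\widetilde f\|^2$), we get $f_n=f_n^y$ in $\cH^{\otimes n}$ for every $n$. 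In other words, $f_n$ vanishes $\fm^{\otimes n}$-a.e. outside $\big((-\infty,y]\times\bR_0\big)^n$.

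Next I would plug this into the definition of the derivative:
\[
D_{x,z}F=\sum_{n\geq 1}nI_{n-1}\big(f_n(\cdot,(x,z))\big).
\]
For $x>y$, we have $f_n(\xi_1,\ldots,\xi_{n-1},(x,z))=f_n^y(\xi_1,\ldots,\xi_{n-1},(x,z))=0$, because the last coordinate lies outside $(-\infty,y]$. Hence every term vanishes and $D_{x,z}F=0$.

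The main obstacle is that $f_n=f_n^y$ holds only $\fm^{\otimes n}$-a.e., while $DF$ is itself an element of $L^2(\Omega;\cH)$, i.e.\ defined only $d\bP\,\fm$-a.e. So the statement ``for any $x>y$'' should be read as an a.e. statement in $(x,z)$, or else one chooses the representative $f_n^y$ of $f_n$. With that representative the identity holds for every $(x,z)$ with $x>y$.

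To make this rigorous I would use Fubini. The relation $\|f_n-f_n^y\|_{\cH^{\otimes n}}=0$ implies that for $\fm$-a.e. $\xi=(x,z)$, the section $f_n(\cdot,\xi)$ equals $f_n^y(\cdot,\xi)$ in $\cH^{\otimes(n-1)}$. Since $\bE\|DF\|_\cH^2=\sum_n n\,n!\|f_n\|^2$, the series defining $D_\xi F$ converges in $L^2(\Omega)$ for a.e. $\xi$. This gives $D_{x,z}F=0$ for a.e. $(x,z)$ with $x>y$, which is the form needed later in the duality argument for the Hitsuda-Skorohod integral.
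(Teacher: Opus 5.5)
Your proof is correct and follows the paper's argument: you condition the chaos expansion of $F$ on $\cF_y$ term by term using Lemma~\ref{cond-Ik}, then conclude that $D_{x,z}F=0$ for $x>y$ because $f_n^y(\cdot,(x,z))$ vanishes. You also make explicit two points the paper leaves implicit: uniqueness of the chaos expansion gives $f_n=f_n^y$ in $\cH^{\otimes n}$, so $DF$ may be computed from the kernels $f_n^y$, and the conclusion holds for $\fm$-a.e.\ $(x,z)$, which is all that Lemma~\ref{Ito-Skor} needs.
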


\begin{proof}
Assume that $F$ has the chaos expansion \eqref{Poisson-chaos}. By Lemma \ref{cond-Ik},
\[
F=\bE[F|\cF_y] =\bE(F)+ \sum_{n\geq 1} \bE[I_n(f_n)|\cF_y]=\bE(F)+\sum_{n\geq 1}I_n(f_n^y).
\]
Then, for any $x>y$ and $z \in \bR_0$, $D_{x,z}F=\sum_{n\geq 1}n I_{n-1}\big(f_n^y(\cdot,x,z)\big)=0$, since
\[
f_n^y(x_1,z_1,\ldots,x_{n-1},z_{n-1},x,z)=f_n(x_1,z_1,\ldots,x_{n-1},z_{n-1},x,z)\prod_{j=1}^{n-1}1_{(-\infty,y]}(x_j)
1_{(-\infty,y]}(x)=0.
\] 
\end{proof}

The following result gives the relation between the Hitsuda-Skorohod integral with respect to $\widehat{N}$ and the It\^o integral with respect to $L$, which originates from definition \eqref{def-L} of $L$.

\begin{lemma}
\label{Ito-Skor}
Let $\Phi \in \cL_2^*$ be arbitrary. Define $V_{\Phi}(x,z)=\Phi(x)z$ for all $x \in \bR$ and $z \in \bR_0$. Then $V_{\Phi} \in {\rm dom}(\delta)$ and $\delta(V_{\Phi})=I(\Phi)$.
\end{lemma}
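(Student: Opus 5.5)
Fix a simple process $\Phi$ of form \eqref{simple}. Verify the duality relation $\bE\langle DF, V_\Phi\rangle_{\cH}=\bE[F\, I(\Phi)]$ for every $F\in{\rm dom}(D)$. Then pass to general $\Phi\in\cL_2^*$ by the approximation in Lemma \ref{approx} and closedness of $\delta$.

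\emph{Simple case.} By linearity it suffices to take $\Phi(x)=Y1_{(a,b]}(x)$ with $Y$ bounded and $\cF_a$-measurable. Then $I(\Phi)=Y L((a,b])=Y\, I_1(g)$ with $g(x,z)=z1_{(a,b]}(x)$, which lies in $\cH$ since $m_2<\infty$. I will use the product rule for the Poisson Malliavin derivative,
\[
D_{\xi}(FG)=F D_\xi G+G D_\xi F+D_\xi F\, D_\xi G,
\]
which is standard and can be derived from the add-one-cost representation $D_\xi F=F(N+\delta_\xi)-F(N)$. For $F$ in a dense class, e.g.\ bounded $F\in{\rm dom}(D)$ with finite chaos expansion, I write
\[
\bE[F Y I_1(g)]=\bE\langle D(FY),g\rangle_{\cH},
\]
which is duality for deterministic $g$: $\delta(g)=I_1(g)$ follows directly from the chaos definitions. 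I then expand $D(FY)=Y\,DF+F\,DY+DF\,DY$. By Lemma \ref{Fa-meas}, $D_{x,z}Y=0$ for $x>a$, so $DY\cdot g=0$ $\fm$-a.e. Both terms containing $DY$ therefore vanish, and $\bE[F\,I(\Phi)]=\bE\langle DF, Y g\rangle_{\cH}=\bE\langle DF,V_\Phi\rangle_{\cH}$.

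The delicate point is whether $Y$ and $FY$ belong to ${\rm dom}(D)$. $Y$ is bounded but need not be in ${\rm dom}(D)$. I would avoid the product rule for such $Y$ by a chaos computation instead. Write $Y=\sum_n I_n(y_n)$ with, by Lemma \ref{cond-Ik}, $y_n$ supported in $(-\infty,a]^n$. The multiplication formula for $I_n(y_n)I_1(g)$ has disjoint supports in the $x$-variable, so the contraction terms vanish and $Y I_1(g)=\sum_n I_{n+1}(y_n\otimes g)$. Pairing with $F=\sum_k I_k(f_k)$ gives
\[
\bE[F\,YI_1(g)]=\sum_n (n+1)!\,\langle f_{n+1},\widetilde{y_n\otimes g}\rangle.
\]
Comparing with
\[
\bE\langle DF,Yg\rangle_{\cH}=\sum_n (n+1)\,n!\,\langle f_{n+1}, y_n\otimes g\rangle
\]
(using symmetry of $f_{n+1}$) yields equality, by an orthogonality argument. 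This shows $\bE\langle DF,V_\Phi\rangle_{\cH}=\bE[F I(\Phi)]$. Cauchy--Schwarz then gives $|\bE\langle DF,V_\Phi\rangle|\le\|I(\Phi)\|_2\|F\|_2$, so $V_\Phi\in{\rm dom}(\delta)$ and $\delta(V_\Phi)=I(\Phi)$.

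\emph{General case.} For $\Phi\in\cL_2^*$, take simple $\Phi_m$ with $[\Phi_m-\Phi]_2\to0$. This uses Lemma \ref{approx} together with truncation to $[-K,K]$ and a diagonal argument. Then
\[
\|V_{\Phi_m}-V_\Phi\|_{L^2(\Omega;\cH)}^2=m_2[\Phi_m-\Phi]_2^2\to0,
\]
and $I(\Phi_m)\to I(\Phi)$ in $L^2(\Omega)$ by the isometry. For each $F\in{\rm dom}(D)$, pass to the limit in $\bE\langle DF,V_{\Phi_m}\rangle_{\cH}=\bE[F\,I(\Phi_m)]$; this is allowed since $DF\in L^2(\Omega;\cH)$. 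The limit gives $\bE\langle DF,V_\Phi\rangle_{\cH}=\bE[F\,I(\Phi)]$, hence $V_\Phi\in{\rm dom}(\delta)$ with $\delta(V_\Phi)=I(\Phi)$.

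The main obstacle is the chaos identity in the simple case, that is, justifying $YI_1(g)=\sum_n I_{n+1}(y_n\otimes g)$ with no contraction terms. I would prove it first for $y_n$ in the elementary class $\cE_k$ from Lemma \ref{cond-Ik}, where it is a direct product of $\widehat N$ of disjoint sets. I would then extend it in $L^2$, using that the relevant norms are controlled by $\|Y\|_2\|g\|_{\cH}$ thanks to the independence of $Y$ and $L((a,b])$.
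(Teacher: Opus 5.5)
Your argument is correct, but in the simple case it takes a different route from the paper.

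\textbf{The paper's route.} The paper first assumes $Y_i\in{\rm dom}(D)$. It invokes the known formula $\delta(Y_iu_i)=Y_i\delta(u_i)-\langle DY_i,u_i\rangle_{\cH}-\delta(u_iDY_i)$ and kills the last two terms with Lemma \ref{Fa-meas}. It then removes the assumption $Y_i\in{\rm dom}(D)$ by approximating $Y_i$ in $L^2(\Omega)$ and using closedness of $\delta$. So the obstacle you flagged with the product rule is exactly the one the paper faces, and it resolves it by approximation rather than by computation.

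\textbf{Your route.} You verify $\bE\langle DF,V_\Phi\rangle_{\cH}=\bE[F\,I(\Phi)]$ directly at the level of kernels. Lemma \ref{cond-Ik} and uniqueness of the chaos expansion put the kernels $y_n$ of $Y$ in $(-\infty,a]^n$. Hence $YI_1(g)=\sum_n I_{n+1}(y_n\otimes g)$ with no contraction terms, and the duality reduces to $(n+1)!=(n+1)\,n!$.

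\textbf{What each approach buys.} Your computation works for every $Y\in L^2(\Omega,\cF_a)$ at once. It needs neither $Y\in{\rm dom}(D)$ nor the external formula for $\delta(Fu)$, and it shows transparently that predictability is what makes the correction terms vanish.

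The price is that you must prove the product identity yourself. Your plan for this is sound: check it on elementary kernels, then extend. Both maps $Y\mapsto YI_1(g)$ and $Y\mapsto\sum_n I_{n+1}(y_n\otimes g)$ multiply $L^2$-norms by $\|g\|_{\cH}$. The first does so by independence of $Y$ and $L((a,b])$. The second does so because the $n+1$ terms of $\widetilde{y_n\otimes g}$ have disjoint supports.

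The paper's route is shorter given the cited result. However, its Step 2 tacitly needs the approximants $Y_i^k$ to be $\cF_{b_{i-1}}$-measurable, for instance chaos truncations obtained via Lemma \ref{cond-Ik}. Your argument never needs this.

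Your limit step, passing to the limit in the duality relation for each fixed $F\in{\rm dom}(D)$, is equivalent to the paper's appeal to closedness of $\delta$. When you write it up, state explicitly why Fubini applies and why $\sum_n$ may be exchanged with the $\fm$-integral in computing $\bE\langle DF,Yg\rangle_{\cH}$. This follows from $DF\in L^2(\Omega;\cH)$ together with Cauchy--Schwarz.
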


\begin{proof}
We argue as in the proof of Theorem 10.2.7 of \cite{NN}.

\medskip

{\em Step 1.} Assume that $\Phi$ is a simple process of form \eqref{simple} where  $Y_{i} \in {\rm dom}(D)$ for all $i=1,\ldots,n$. Let $u_i(x,z)=z 1_{(b_{i-1},b_i]}(x)$. Fix $i \in \{1,\ldots,n\}$. By Proposition 10.2.6 of \cite{NN}, $Y_i u_i \in {\rm dom}(\delta)$ and
\[
\delta(Y_i u_i)=y_i \delta(u_i)-\langle DY_i,u_i \rangle_{\cH}-\delta(u_i DY_i).
\]
Since $Y_i$ is $\cF_{b_{i-1}}$-measurable, $D_{x,z}Y_i=0$ for all $x>b_{i-1}$ (by Lemma \ref{Fa-meas}). Hence,
\[
D_{x,z} Y_i z 1_{(b_{i-1},b_i]}(x)=0 \quad \mbox{for all $(x,z)\in \bR \times \bR_0$.}
\]
It follows that $\langle DY_i,u_i \rangle_{\cH}=0$ and $u_i DY_i=0$. Therefore,
\[
\delta(Y_i u_i)=Y_i \delta(u_i)=Y_i \int_{(b_{i-1},b_i] \times \bR_0} z  \widehat{N}(dx,dz)=Y_i L\big((b_{i-1},b_i] \big)=I(Y_i 1_{(b_{i-1},b_i]}).
\]

Note that $V_{\Phi}(x,z)=z\Phi(x)=\sum_{i=1}^{n}Y_iu_i(x,z)$. Hence, $V_{\Phi} \in {\rm dom}(\delta)$ and
\[
\delta(V_{\Phi})=\sum_{i=1}^{n} \delta(Y_i u_i)=\sum_{i=1}^{n}Y_i L\big((b_{i-1},b_i] \big)=I(\Phi).
\]

\medskip

{\em Step 2.} Assume that $\Phi \in \cE$ is of form \eqref{simple}. Fix $i\in \{1,\ldots, n\}$. Let $u_i$ be as in Step 1. Since ${\rm dom}(D)$ is dense in $L^2(\Omega)$, there exists a sequence $\{Y_i^{k}\}_{k\geq 1} \subset {\rm dom}(D)$ such that $\bE|Y_i^k-Y_i|^2 \to \infty$ as $k \to \infty$. 
By {\em Step 1}, for any $k\geq 1$, $Y_i^k u_i \in {\rm dom}(\delta)$ and 
\begin{equation}
\label{dI}
\delta(Y_i^k u_i) =I(Y_i^k 1_{(b_{i-1},b_i]}).
\end{equation} 
Note that  $Y_i^k u_i \to Y_i u_i$ in $L^2(\Omega;\cH)$ as $k \to \infty$, since
\[
\bE\|Y_i^k u_i - Y_i u_i\|_{\cH}^2=\bE\int_{\bR \times \bR_{0}} |Y_i^k-Y_i|^2 u_i^2(x,z)dx\nu(dz)=m_2 (b_i-b_{i-1})\bE|Y_i^k-Y_i|^2.
\]
Since $\delta$ is a closed operator, $\delta(Y_i^k u_i) \to \delta(Y_i u_i)$ in $L^2(\Omega)$, as $k \to \infty$.
Letting $k \to \infty$ in \eqref{dI}, we obtain that $\delta(Y_i u_i) =I(Y_i 1_{(b_{i-1},b_i]})$. The conclusion follows.

\medskip

{\em Step 3.} Let $\Phi \in \cL^*$ be arbitrary. By the construction of the integral, there exists a sequence $\{\Phi_n\}_{n\geq 1}$ in $\cE$ such that
\[
\bE\int_{\bR}|\Phi_n(x)-\Phi(x)|^2 dx \to 0 \quad \mbox{as} \quad n\to \infty.
\]
By {\em Step 2}, $\delta(V_{\Phi_n})=I(\Phi_n)$ for any $n\geq 1$. The conclusion follows letting $n \to \infty$ in $L^2(\Omega)$, using the fact that $\delta$ is a closed operator and $V_{\Phi_n} \to V_{\Phi}$ in $L^2(\Omega;\cH)$.

\end{proof}

{\em Acknowledgement.} The authors would like to thank Jinxin Wang for his help with the proof of Lemma 
\ref{lem-p-mom-L}.

\end{document}